\newcommand{\beqnum}{\begin{equation}\begin{array}{lcl}}
\newcommand{\eeqnum}{\end{array}\end{equation}}
\newcommand{\beqnom}{\begin{eqnarray}}
\newcommand{\eeqnom}{\end{eqnarray}}
\newcommand{\beqnc}{\begin{center}\begin{eqnarray}}
\newcommand{\eeqnc}{\end{eqnarray}\end{center}}
\newcommand{\beqnlm}{\begin{equation}\vspace{-.5cm}\begin{array}{lll}}
\newcommand{\eeqnlm}{\end{array}\end{equation}}\vspace{-.5cm}
\newcommand{\beq}{\begin{eqnarray*}}
\newcommand{\eeq}{\end{eqnarray*}}
\newcommand{\bef}{\begin{figure}[tbh!]}
\newcommand{\enf}{\end{figure}}
\newcommand{\vep}{\varepsilon}
\newcommand{\fep}{F_\varepsilon}
\newcommand{\R}{\mathbb{R}}
\newcommand{\lf}{\left\lfloor}
\newcommand{\rr}{\right\rceil}
\newtheorem{montheo}{\bf Theorem}
\newtheorem{rem}{\bf Remark}
\newtheorem{lemme}{\bf Lemma}
\newtheorem{Proposition}{\bf Proposition}
\newtheorem{defn}{\bf Definition}
\title{\LARGE \bf
Practical stabilization of perturbed integrator chains with unknown bounds\thanks{This research was partially supported by the iCODE Institute, research project of the IDEX Paris-Saclay, and by the Hadamard Mathematics LabEx (LMH) through the grant number ANR-11-LABX-0056-LMH in the ``Programme des Investissements d'Avenir''.}
}
\author{Yacine Chitour, Mohamed Harmouche, Salah Laghrouche
\thanks{Y. Chitour, L2S - Universite Paris XI, CNRS 91192 Gif-sur-Yvette, France. {\tt\small yacine.chitour@lss.supelec.fr}}%
\thanks{M. Harmouche, Actility, Paris, France.
        {\tt\small mohamed.harmouche@actility.com}}%
\thanks{S. Laghrouche, OPERA Laboratory, UTBM, Belfort, France.
{\tt\small salah.laghrouche@utbm.fr}}%
}
\date{}
\begin{document}
\maketitle

\thispagestyle{empty}
\pagestyle{empty}

\begin{abstract}                          
In this paper, we present Lyapunov-based adaptive controllers for the practical (or real) stabilization of a perturbed chain of integrators with bounded uncertainties. We refer to such controllers as Adaptive Higher Order Sliding Mode (AHOSM) controllers since they are designed for nonlinear SISO systems with bounded uncertainties such that the uncertainty bounds are unknown. Our main result states that, given any neighborhood $\cal{N}$ of the origin, we determine a controller insuring, for every uncertainty bounds, that every trajectory of the corresponding closed loop system enters $\cal{N}$ and eventually remains there. The effectiveness of these controllers is illustrated through simulations.
\end{abstract}
\begin{keywords}                          
Finite Time Stabilization. Perturbed integrator chain. Adaptive Control. Sliding mode.
\end{keywords}

\section{Introduction}
Parametric uncertainty in nonlinear dynamic physical systems arises from varying operating conditions and external perturbations that affect the physical characteristics of such systems. The variation limits or the bounds of this uncertainty might be known or unknown. This needs to be considered during control design so that the controller counteracts the effect of variations and guarantees performance under different operating conditions. Sliding mode control (SMC) \cite{Utkin,Slotine1984} is a well-known method for control of nonlinear systems, renowned for its insensitivity to parametric uncertainty and external disturbance. This technique is based on applying discontinuous control on a system which ensures convergence of the output function (sliding variable) in finite time to a manifold of the state-space, called the sliding manifold \cite{Young_Utkin}. In practice, SMC suffers from \emph{chattering}; the phenomenon of finite-frequency, finite-amplitude oscillations in the output which appear because the high-frequency switching excites unmodeled dynamics of the closed loop system \cite{Utkin1999}.  Higher Order Sliding Mode Control (HOSMC) is an effective method for chattering attenuation \cite{Emelyanove}. In this method the discontinuous control is applied on a higher time derivative of the sliding variable, such that not only the sliding variable converges to the origin, but also its higher time derivatives. As the discontinuous control does not act upon the system input directly, chattering is automatically reduced.

Many HOSMC algorithms exist in contemporary literature for control of uncertain nonlinear systems, where the bounds on uncertainty are known. These are robust because they preserve the insensitivity of classical sliding mode, and maintain the performance characteristics of the closed loop system as long as it remains inside the defined uncertainty bounds. Levant for example, has presented a method of designing arbitrary order sliding mode controllers for Single Input Single Output (SISO) systems in \cite{Levant2001}. In his recent works \cite{Levant2003,Levant2005}, homogeneity approach has been used to demonstrate finite time stabilization of the proposed method. Laghrouche et al. \cite{Laghrouche2} have proposed a two part integral sliding mode based control to deal with the finite time stabilization problem and uncertainty rejection problem separately. Dinuzzo et al. have proposed another method in \cite{Dinuzzo09}, where the problem of HOSM has been treated as Robust Fuller's problem. Defoort et al. \cite{Defoort2009} have developed a robust Multi Input Multi Output (MIMO) HOSM controller, using a constructive algorithm with geometric homogeneity based finite time stabilization of a chain of integrators. Harmouche et al. have also presented their homogeneous controller in \cite{HARMOUCHE1} based on the work of Hong \cite{Hong}.

The case where the bounds on uncertainty exist, but are unknown, is an interesting problem in the field of arbitrary HOSMC. In this problem, a good control strategy is expected to have two essential properties: (a) no use of the uncertainty bounds in the stabilizing controller design; (b) avoidance of gain overestimation \cite{Plestan2010}. In recent years, adaptive sliding mode controllers have attracted the interest of many researchers for this case\cite{Ferreira_CST2011, Efimov2011, Bartolini_IMA2013, Moreno2016IJC}. Adaptive gains have been used with success in the past for chattering suppression. For example, Bartolini et al. \cite{Bartolini_unknown} have extended Utkin's concept of equivalent control for second order sliding mode control gain adaptation, to suppress residual oscillations due to digital controllers with time delay. Similarly, an equivalent control based adaptive controller is described in \cite{XU2004}, in which the equivalent control estimation is improved, using double low pass filters. A concise survey of these methods can be found in \cite{Utkin_chatteringanalysis}. Huang et al. \cite{Huang2008} were the first to use dynamic gain adaptation in SMC for the problem of unknown uncertainty bounds. They presented an adaptation law for first order SMC, which depends directly upon the sliding variable; the control gains increase until sliding mode is achieved, and afterwards the gains become constant. This method works without a-priori knowledge of uncertainty bounds, however it does not solve the gain overestimation problem as the gains stabilize at unnecessarily large values. Plestan et al. \cite{Plestan2010,Plestan2012} have overcome this problem by slowly decreasing the gains once sliding mode is achieved. This method establishes real sliding mode (convergence to a neighborhood of the sliding surface). However it does not guarantee that the states would remain inside the neighborhood after convergence; the state actually overshoots in a region around the neighborhood depending on the uncertainties bounds, which is therefore not known. In the field of HOSMC, Shtessel et al. \cite{Shtessel} have presented a Second Order adaptive gain SMC for non-overestimation of the control gains, based on a supertwisting algorithm. An adaptive version of the twisting algorithm is proposed in \cite{Levant2012}, and applied for pneumatic actuator control. The state overshoots in the cases of \cite{Shtessel} and \cite{Levant2012} as well. However, unlike \cite{Plestan2010}, the magnitude is unknown. A Lyapunov-based variable gains super twisting algorithm has also been presented in \cite{Davila2010}. Glumineau et al. \cite{Glumineau_Adaptive} have presented a different approach, based on impulsive sliding mode adaptive control of a double integrator system. The gain of the impulsive control is adapted to minimize the convergence time of the double integrator dynamics. In \cite{Edwards2016} and \cite{Shtessel2013}  continuous AHOSM control algorithms are studied. They are based on reconstruction of equivalent control. All these works insure sliding set convergence to a bounded zone whose size and convergence time depend upon the upper bounds of the perturbations or their derivatives. In particular, if these upper bounds are not a priori known, one cannot prescribe in advance the size of the convergence zone.

It can be noted that all research works avoiding gain overestimation, discussed so far, have yielded real sliding mode. In fact, real sliding mode is the only possibility when the uncertainty bounds are unknown, as the gain dynamics cannot respond immediately to sudden changes in system parameters. 

In this paper, we present Lyapunov-based adaptive controllers for the finite time stabilization of a perturbed chain of integrators with bounded uncertainties. Through a minor extension of the definition (as explained in the next section), we refer to such controllers as Adaptive Higher Order Sliding Mode controllers (AHOSM controllers or AHOSMC).  The proposed adaptive controller guarantees finite time convergence to an adjustable arbitrary neighborhood of origin whose size does not depends upon the upper bounds of the perturbations or their derivatives, i.e., it establishes real HOSM.  The advantage of this adaptive controller design, compared to other controllers mentioned before, is that this controller can be extended to arbitrary order and the adaptation rates are fast in both directions. 
In addition, the state is confined inside the neighborhood after convergence and cannot escape. As a result, there is no state overshoot and no gain overestimation in this controller; and the neighborhood of convergence can be chosen as small as possible independently  of the upper bounds of the perturbations or their derivatives.

The paper is organized as follows: problem formulation and adaptive controllers are presented  in Section 2, simulation results are shown in Section 3. Some concluding remarks are given in Section 4.



\section{Higher Order Sliding Mode Controllers}

If $r$ is a positive integer, the perturbed chain of integrators of length $r$ corresponds to the (uncertain) control system given by 
\beqnum\label{u.l.s.}
\dot z_r =\varphi(t) + \gamma(t)u, 
\eeqnum
where $z=[z_1\  z_2\ ... z_r ]^T\in\mathbb{R}^r$, $u\in\mathbb{R}$ and the functions $\varphi$ and $\gamma$ are any measurable functions defined almost everywhere (a.e. for short) on $\mathbb{R}_+$ and bounded by positive constants $\bar \varphi$, $\gamma_m$ and $\gamma_M$, such that, for a.e. $t\geq 0$, 
\beqnum\label{bound}
\left| \varphi(t) \right| \le \bar \varphi,\quad
0 < \gamma_m\le \gamma(t) \le \gamma_M.
\eeqnum
One can equivalently define a perturbed chain of integrators of length $r$ as the differential inclusion $\dot z_r\in  I_{\bar\varphi}+u I_\gamma$ where $I_{\bar\varphi}=\left[-\bar\varphi , \bar\varphi \right]$ and $ I_\gamma=\left[\gamma_m , \gamma_M \right]$.

The usual objective regarding System \eqref{u.l.s.} consists of stabilizing it with respect to the origin in finite time, i.e., determining feedback laws $u=U(z)$ so that the trajectories of the corresponding closed-loop system converge to the origin in finite time. Note that, in general, the controllers $U(\cdot)$ are discontinuous and then, solutions of  \eqref{u.l.s.} need to be understood here in Filippov's sense \cite{Filippov}, i.e., the right-hand vector set is enlarged at the discontinuity points of the differential inclusion to the convex hull of the set of velocity vectors obtained by approaching $z$ from all directions in $\mathbb{R}^r$, while avoiding zero-measure sets. Several solutions for this problem exist  \cite{levant93,  Levant_Springer, Levant2005, Laghrouche2, Laghrouche_CST}  under the hypothesis that the bounds $\gamma_m,\gamma_M$ and $\bar \varphi$ are known. 

In case the bounds $\gamma_m,\gamma_M$ and $\bar \varphi$ are unknown (one only assumes their existence) then it is obvious to see that  finite time stabilization is not possible by a mere state feedback and therefore, one possible alternate objective consists in achieving practical stabilization. This is the goal of this paper to establish such a result for System \eqref{u.l.s.} and we provide next a precise definition of  practical stabilization.
\begin{defn}\label{def1} We say that $\dot z=f(z,u)$ is practically stabilizable, if for every $\varepsilon>0$, there exists a controller $u=U_\vep(\cdot,t)$ such that every trajectory of the closed-loop system $\dot z=f(z,U_\vep(\cdot,t))$ enters the open ball of radius $\vep$ centered at the origin and eventually remains there. 
\end{defn}
The main result of that paper consists of designing controllers which practically stabilize System \eqref{u.l.s.} {\emph{independently} }of the positive bounds $\bar \varphi$, $\gamma_m$ and $\gamma_M$, i.e., for every $\vep>0$, the controllers $u=U_\vep(\cdot,t)$ which practically stabilize System \eqref{u.l.s.} does not depend on the bounds $\bar \varphi$, $\gamma_m$ and $\gamma_M$.

We next recall the following definition needed in the sequel.  
\begin{defn}\label{homogeneity} ({\bf Homogeneity.} cf. \cite{Levant_Springer}.)
If $r,m$ are positive integers, a function $f:\mathbb{R}^r\rightarrow \mathbb{R}^m$ (or a differential inclusion $F:\mathbb{R}^r\rightrightarrows \mathbb{R}^m$ respectively) is said to be {\it homogeneous of degree $q\in\mathbb{R}$ with respect to the family of dilations $\delta_\vep(z)$}, $\vep>0$, defined by 
$$
\delta_\vep(z)=(z_1,\cdots,z_r)\mapsto (\vep^{p_1}z_1,\cdots,\vep^{p_r}z_r),
$$
where $p_1\cdots,p_r$ are positive real numbers (the weights), if for every positive $\vep$ and $z\in\mathbb{R}^r$, one has $f(\delta_\vep(z))=\vep^qf(z)$  $\big(F(\delta_\vep(z))=\vep^q\delta_\vep(F(z))\ respectively\big)$.
\end{defn}

The following notations will be used throughout the paper.
We define the function $sgn$ as the multivalued function defined on $\mathbb{R}$ by $sgn(x)=\frac x{\vert x\vert}$ for $x\neq 0$ and $sgn(0)=[-1,1]$. Similarly, for every $a\geq 0$ and $x\in \mathbb{R}$, we use $\lf x\rr^a$ to denote $\left| x \right|^a \text{sgn}(x)$. Note that $\lf \cdot\rr^a$ is a  continuous function for $a>0$ and of class $C^1$ with derivative equal to $a\left| \cdot \right|^{a-1}$ for $a\geq 1$. Moreover, for every positive integer $r$, we use $J_r$ to denote the $r$-th Jordan block, i.e., the $r\times r$ matrix whose $(i,j)$-coefficient is equal to $1$ if $i=j-1$ and zero otherwise.

\subsection{Adaptive Higher Order Sliding Mode Controller}\label{adaptatif}
We first define the system under study and provide parameters used later on. 
\begin{defn}\label{def0} {\it Let $r$ be a positive integer. The $r$-th order chain of integrator
$(CI)_r$ is the single-input control system given by 
\beqnum \label{pure_int}
(CI)_r\ \ \ \dot z=J_rz+u e_r,
\eeqnum
with $z=(z_1,\cdots,z_r)^T\in\mathbb{R}^r$ and $u\in \mathbb{R}$.
For $\kappa<0$ and $p>0$ with $p+(r+1)\kappa\in [0,1)$, set $p_i := p + (i-1)\kappa, \ 1\leq i\leq r+2$. For $\varepsilon>0$, let $\delta_\vep:\R^r\rightarrow \R^r$ be the family of dilations associated with $\left( p_1, \cdots , p_r \right)$.}
\end{defn}
In the spirit of \cite{Laghrouche_CST, Harmouche_CDC12}, we put forwards geometric conditions on certain stabilizing feedbacks $u_0(\cdot)$ for $(CI)_r$ and corresponding Lyapunov functions $V_1$. These conditions will be instrumental for the latter developments. 

Our construction of the feedback for practical stabilization relies on the following result. 
%
%
\begin{montheo}\label{theo1}
Let $r$ be a positive integer. There exists a feedback law 
$u_0:\R^r\rightarrow\R$ 
homogeneous  with  respect to $(\delta_\vep)_{\vep>0}$ such that the closed-loop system $\dot z=J_rz+u_0(z)e_r$ is finite time globally asymptotically stable with respect to the origin and the following conditions hold true:
\begin{description}
	\item[$(i)$] the function $z\mapsto J_rz+u_0(z)e_r$ is homogeneous of degree $\kappa$ with respect to $(\delta_\vep)_{\vep>0}$ and there exists a continuous positive definite function $V_1:\R^r\rightarrow \R_+$, $C^1$ except at the origin, homogeneous with respect to $(\delta_\vep)_{\vep>0}$ such that there exists $c>0$ and $\alpha\in (0,1)$ for which the time derivative of $V_1$ along non trivial trajectories of $\dot z=J_rz+u_0(z)e_r$ verifies 
	\beqnum \label{est-V1}
	\dot V_1 \le -cV_1^\alpha;
	\eeqnum
		\item[$(ii)$] the function $z\mapsto u_0(z)\partial_r V_1(z)$ is non positive over $\mathbb{R}^r$ and, for every non zero $z\in \mathbb{R}^r$ verifying $u_0(z)=0$, one has $\partial_r V_1(z)=0$. As a consequence function $z\mapsto sgn(u_0(z))\partial_r V_1(z)$
		is well-defined over $\mathbb{R}^r$ and non positive.
\end{description}
\end{montheo}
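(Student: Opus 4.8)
The plan is to produce $u_0$ and $V_1$ simultaneously by the recursive backstepping (``adding a power integrator'') scheme of Hong \cite{Hong}, adapted to the weights $(p_1,\dots,p_r)$ so that homogeneity holds by construction, and then to read off $(i)$ and $(ii)$ from the explicit form of the resulting objects. Concretely, I would set $v_0=0$ and define, for $1\le k\le r$, a normalized error $\xi_k = \lf z_k\rr^{1/p_k} - \lf v_{k-1}\rr^{1/p_k}$ (so that each $\xi_k$ is homogeneous of weight $1$ under $\delta_\vep$), a virtual feedback $v_k = -c_k\lf \xi_k\rr^{p_{k+1}}$ (the desired value of $z_{k+1}$, of weight $p_{k+1}$), and a partial Lyapunov function $W_k = \int_{v_{k-1}}^{z_k}\lf \lf s\rr^{1/p_k} - \lf v_{k-1}\rr^{1/p_k}\rr^{\mu_k}\,ds \ge 0$, where the gains $c_k>0$ and exponents $\mu_k$ are fixed, as in \cite{Hong,Laghrouche_CST,Harmouche_CDC12}, so that $V_k := W_1+\cdots+W_k$ is positive definite, homogeneous of a common degree $m$, and of class $C^1$ off the origin; this last requirement is where the hypothesis $p+(r+1)\kappa\in[0,1)$ enters, keeping all exponents in the admissible range. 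Taking $u_0 := v_r$ and $V_1 := V_r$, the closed-loop drift $z\mapsto J_rz+u_0(z)e_r$ is homogeneous of degree $\kappa$ since each $z_{i+1}$ has weight $p_{i+1}=p_i+\kappa$ and $u_0$ has weight $p_{r+1}=p_r+\kappa$.

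For $(i)$, the asymptotic stability is the content of the recursion: the standard backstepping estimates give $\dot V_k\le -c_k'V_k^{\beta_k}$ at each stage, so that $\dot V_1<0$ off the origin. To pass to the homogeneous estimate \eqref{est-V1}, note that $V_1$ is homogeneous of degree $m$ while $\dot V_1=\sum_i\partial_iV_1\,f_i$ is homogeneous of degree $m+\kappa$, because $\partial_iV_1$ has weight $m-p_i$ and $f_i$ has weight $p_i+\kappa$. Setting $\alpha=1+\kappa/m$ makes $V_1^\alpha$ homogeneous of degree $m+\kappa$ as well; since $-\dot V_1$ and $V_1^\alpha$ are continuous, positive off the origin, and homogeneous of equal degree, a compactness argument on the homogeneous unit sphere yields $c:=\min(-\dot V_1)/V_1^\alpha>0$, which is \eqref{est-V1}. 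Here $\alpha<1$ because $\kappa<0$, and $\alpha>0$ because $m$ is chosen with $m>-\kappa$. Finite-time global asymptotic stability then follows from \eqref{est-V1} by the usual comparison argument, since $\frac{d}{dt}V_1^{1-\alpha}\le -c(1-\alpha)$ forces $V_1$ to vanish after a time bounded by $V_1(z(0))^{1-\alpha}/\big(c(1-\alpha)\big)$.

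Condition $(ii)$ is where the tight coupling between $u_0$ and $V_1$ is exploited, and I expect this to be the main point requiring care. By construction each $W_k$ depends on $z_1,\dots,z_k$ only (through $v_{k-1}$), so only the last term $W_r$ depends on $z_r$; hence $\partial_rV_1=\partial_rW_r=\lf \xi_r\rr^{\mu_r}$, whereas $u_0=v_r=-c_r\lf \xi_r\rr^{p_{r+1}}$. Both are odd, sign-preserving functions of the single coordinate $\xi_r$, so $u_0(z)\,\partial_rV_1(z)=-c_r\,|\xi_r|^{\mu_r+p_{r+1}}\le 0$ on all of $\R^r$, and $u_0(z)=0\iff\xi_r=0\iff\partial_rV_1(z)=0$; consequently $sgn(u_0(z))\,\partial_rV_1(z)=-|\xi_r|^{\mu_r}$ is well defined and non positive. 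The delicate part is precisely to guarantee this structural identity: a Lyapunov function supplied by an abstract converse theorem would not satisfy $(ii)$, so the construction must be arranged so that $\partial_rV_1$ sees $z_r$ only through the same error variable $\xi_r$ that defines the last feedback $u_0$. Checking that the exponents $\mu_k$ can be chosen to keep $V_1$ of class $C^1$ away from the origin while preserving this structure, together with verifying the backstepping sign estimates at each of the $r$ stages, constitutes the bulk of the technical work.
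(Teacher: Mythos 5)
Your proposal is correct and follows essentially the same route as the paper, which establishes Theorem~\ref{theo1} by invoking Hong's homogeneous backstepping controller together with its associated Lyapunov function (the very objects recalled in \eqref{Hongfunction}--\eqref{Lyap-Hong}), with item $(i)$ obtained by the standard homogeneity/compactness argument and item $(ii)$ read off from the structural fact that $\partial_r V_1$ and $u_0$ both vanish with, and share the sign of, the last error variable. Your minor variant of the Lyapunov integrand (exponent on the difference of the $1/p_k$-powers rather than the difference of the $\beta_{j-1}$-powers) is immaterial to this structure.
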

%
%
\begin{rem}
Item $(i)$ of the above theorem is classical, see for instance \cite{Hong,Huang2005,Zavala_2014,arxiv2013}. Item $(ii)$ considers a geometric condition on controllers verifying Item $(i)$, which was introduced in \cite{Harmouche_CDC12} and used in \cite{Laghrouche_CST, CHL15}. This geometric condition is indeed satisfied, for instance by Hong's controller, see \cite{CHL15} for other examples.
\end{rem}

Regarding our problem, we consider, for every $\vep>0$ the following controller:
\beqnum \label{eq_v}
u_\vep(z,t) = g(\vert u_0(z)\vert) u_0(z) +k\, \hbox{sgn}\big( u_0(z) )\hat \varphi_\vep (t,V_1(z)\big),
\eeqnum
where  $u_0$ and $V_1$ are provided by Theorem~\ref{theo1}, $g:\mathbb{R}_+\rightarrow \mathbb{R}_+^*$ is an increasing $C^1$ function with $\lim_{x\rightarrow +\infty}g(x)=+\infty$ and 
the adaptive function $\hat \varphi_\vep$ is defined as  
\beqnum
		\hat \varphi_\vep(t,x) &=& \left\{
		\begin{array}{ccc}
\min\big(t, F_\varepsilon(x)\big), & \text{ if } 0\leq x< \varepsilon ,& \\
t,  &\text{ if }x\geq \varepsilon,& 
\end{array}
\right.
\eeqnum
with $\fep(x)=\frac{\vep}{\vep-x}$ for $x\in [0,\vep)$. Here the positive function $g$ and the positive constant $k$ are gain parameters whereas $\vep>0$ will be used to define the arbitrarily small neighborhood where trajectories of \eqref{u.l.s.} with feedback control law \eqref{eq_v} will eventually end up.   

The following theorem provides the main result for the adaptive controller $u_\vep$.
\begin{montheo}\label{theorem_adaptatif}
Let $r$ be a positive integer and System \eqref{u.l.s.} be the perturbed $r$-chain of integrators with unknown bounds $\gamma_m,\gamma_M$ and $\bar{\varphi}$. Let $\vep>0$ and $u_0,V_1:\mathbb{R}^r\rightarrow \mathbb{R}_+$ be the feedback law and the continuous positive definite function defined respectively in Theorem~\ref{theo1}. Then, for every trajectory $z(\cdot)$ of the closed-loop system \eqref{u.l.s.} under the feedback control law \eqref{eq_v}, one has 
\beqnum\label{mainR}
\limsup_{t\rightarrow \infty}V_1(z(t)) \leq \max\big(0,\overline{V}_1\big),
\eeqnum
where $\overline{V}_1:=\vep(1-\frac1{\bar\Phi})$, with $\bar \Phi := \frac1{k\gamma_m}\left( \bar \varphi-h_m\right)$, where $h_m=\min\big(0,\min_{x\geq 0}(\gamma_mg(x)-1)x\big)$.
\end{montheo}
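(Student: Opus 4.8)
The plan is to run a Lyapunov analysis with $V_1$, estimating its derivative along the Filippov trajectories of the closed loop and proving that, for all large enough $t$, $\dot V_1<0$ as soon as $V_1$ exceeds $\max(0,\overline V_1)$. Writing the closed loop as $\dot z=J_rz+(\varphi+\gamma u_\vep)e_r$ and adding/subtracting the nominal feedback, I would split, for $z\neq 0$ where $V_1$ is differentiable,
\[ \dot V_1=\big(\nabla V_1\cdot J_rz+u_0(z)\partial_rV_1\big)+\big(\varphi+\gamma u_\vep-u_0(z)\big)\partial_rV_1. \]
The first bracket equals $\nabla V_1(z)\cdot\big(J_rz+u_0(z)e_r\big)$, so Item $(i)$ of Theorem~\ref{theo1} bounds it by $-cV_1^\alpha$; it then remains to dominate the perturbation bracket. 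These identities hold for a.e. $t$ along any trajectory, the discontinuity set of $u_\vep$ (essentially $\{u_0(z)=0\}$) and the non-differentiability locus of $V_1$ being negligible, which I would make precise through the set-valued (Filippov) derivative.

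The crux is a sign analysis built on Item $(ii)$. Put $\sigma:=|\partial_rV_1|\ge 0$. Item $(ii)$ guarantees that $\mathrm{sgn}(u_0)\partial_rV_1$ is single valued and non positive, and since its absolute value is $\sigma$ one gets $\mathrm{sgn}(u_0)\partial_rV_1=-\sigma$ and $u_0\partial_rV_1=-|u_0|\sigma$; this is precisely what neutralizes the discontinuity at $\{u_0=0\}$, where $\partial_rV_1=0$. Substituting $u_\vep=g(|u_0|)u_0+k\,\mathrm{sgn}(u_0)\hat\varphi_\vep$ and using $|\varphi|\le\bar\varphi$ gives
\[ \dot V_1\le -cV_1^\alpha+\sigma\Big(\bar\varphi-(\gamma g(|u_0|)-1)|u_0|-k\gamma\hat\varphi_\vep\Big). \]
Since $\gamma\ge\gamma_m$ and $g,|u_0|,\hat\varphi_\vep,\sigma\ge 0$, I would lower bound $(\gamma g(|u_0|)-1)|u_0|\ge(\gamma_mg(|u_0|)-1)|u_0|\ge h_m$ (the last inequality being the definition of $h_m$) and replace $\gamma$ by $\gamma_m$ in the adaptive term; with $\bar\varphi-h_m=k\gamma_m\bar\Phi$ this collapses to the key estimate
\[ \dot V_1\le -cV_1^\alpha+k\gamma_m\,\sigma\,(\bar\Phi-\hat\varphi_\vep). \]

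I would then exploit the design of the adaptive gain. The decisive observation is that $\overline V_1$ is exactly the level on which $\fep$ equals $\bar\Phi$: from $\vep-\overline V_1=\vep/\bar\Phi$ one reads $\fep(\overline V_1)=\bar\Phi$ (assuming $\bar\Phi>0$; if $\bar\Phi=0$, i.e. no net perturbation, the estimate already gives $\dot V_1\le -cV_1^\alpha$ and finite-time convergence). Fix $t>\bar\Phi$ and suppose $V_1(z(t))>\max(0,\overline V_1)$. If $V_1(z(t))\ge\vep$ then $\hat\varphi_\vep=t>\bar\Phi$. If $V_1(z(t))<\vep$, then since $\fep$ is strictly increasing on $[0,\vep)$ and $V_1(z(t))>\overline V_1$ (using $\fep(0)=1>\bar\Phi$ in the case $\overline V_1<0$) one has $\fep(V_1(z(t)))>\bar\Phi$, hence $\hat\varphi_\vep=\min\big(t,\fep(V_1(z(t)))\big)>\bar\Phi$. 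In every case $\bar\Phi-\hat\varphi_\vep<0$, so the correction term in the key estimate is $\le 0$ and $\dot V_1\le -cV_1^\alpha<0$.

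It remains to pass from this pointwise decrease to \eqref{mainR}. For $t$ beyond some $T>\bar\Phi$, the sublevel set $\{V_1\le\max(0,\overline V_1)\}$ is forward invariant along the trajectory (on its complement $\dot V_1<0$, so $V_1$ cannot cross the level upward), and while $V_1>\max(0,\overline V_1)$ the bound $\dot V_1\le -cV_1^\alpha$ drives $V_1$ down to that level in finite time, either using $\dot V_1\le -c(\max(0,\overline V_1))^\alpha<0$ when the level is positive, or the classical finite-time comparison for $\dot V_1\le -cV_1^\alpha$ with $\alpha\in(0,1)$ when the level is $0$; this yields the claimed $\limsup$ bound. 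I expect the two delicate points to be the rigorous sign bookkeeping in the non-smooth/Filippov setting (where Item $(ii)$ is exactly what keeps the estimate meaningful at $\{u_0=0\}$) and forward completeness on $[0,T]$ before the gain has built up; the latter follows from the key estimate, since $\hat\varphi_\vep\ge 0$ and a homogeneity bound $\sigma\le CV_1^\theta$ with $\theta<1$ preclude finite-time blow-up.
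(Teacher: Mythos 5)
Your proposal is correct and takes essentially the same approach as the paper's proof: the same key estimate $\dot V_1 \le -cV_1^\alpha - k\gamma_m\left|\partial_r V_1\right|\big(\hat\varphi_\vep-\bar\Phi\big)$ derived from Items $(i)$--$(ii)$ of Theorem~\ref{theo1}, the same pivot $\fep(\overline{V}_1)=\bar\Phi$, and the same forward-completeness argument via a homogeneity bound $|\partial_r V_1|\le C V_1^{p_{r+2}}$ with $p_{r+2}<1$. The only cosmetic differences are that the paper treats the cases $1\geq\bar\Phi$ and $1<\bar\Phi$ separately where you unify them through $\max\big(0,\overline{V}_1\big)$, and it concludes with an ``at most one crossing time'' argument where you invoke forward invariance of the sublevel set plus finite-time decrease.
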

%
%
%
%
\subsection{Proof of Theorem \ref{theorem_adaptatif}}
We refer to $(S)$ as the closed-loop system defined by  \eqref{u.l.s.} and \eqref{eq_v}. 
The first issue we address is the existence of trajectories of $(S)$ starting at any initial condition $z_0\in\mathbb{R}^r$. Such an existence follows from the fact that the application $\mathbb{R}_+\times \mathbb{R}^r\rightarrow \mathbb{R}$, $(t,z)\mapsto \hat \varphi_\vep (t,V_1(z))$ is continuous. 

We next show that every trajectory of $(S)$ is defined for all positive times. For that purpose, consider a non trivial trajectory $z(\cdot)$ and let $I_{z(\cdot)}$ be its (non trivial) domain of definition.  We obtain the following inequality for the time derivative of  $V_1(z(\cdot))$ on  $I_{z(\cdot)}$ by using Items $(i)$ and $(ii)$ of Theorem~\ref{theo1}. For a.e. $t\in I_{z(\cdot)}$, one gets
%
\beqnum \label{dot_V1}
	\dot V_1 &=&  \frac{\partial V_1}{\partial z_1 }z_2 + ... + \frac{\partial V_1}{\partial z_r }  \left( \gamma  \left[ g(\vert u_0\vert)  u_0 +k\, \text{sgn} (u_0) \hat \varphi_\vep  \right] + \varphi \right),\\
	&\le& -c V_1 ^\alpha \!-\! \!\left|\frac{\partial V_1}{\partial z_r } \right| \!\! \Big( \! (\gamma_m g(\vert u_0\vert)-1) |u_0|  \!+\! k\gamma_m \hat \varphi_\vep \!-\! \bar \varphi \Big),\\
&\le& -c V_1 ^\alpha - k\gamma_m\left|\frac{\partial V_1}{\partial z_r } \right| \left( \hat \varphi_\vep - \bar \Phi\right).
\eeqnum
We thus have the differential inequality a.e. for $t\in I_{z(\cdot)}$
\beqnum\label{dot_V1-1}
\dot V_1 \leq -c V_1 ^\alpha +C_1V_1^{p_{r+2}},
\eeqnum
where $C_1$ is a positive constant independent of the trajectory $z(\cdot)$. Since $p_{r+2}\in [0,1)$, it is therefore immediate to deduce that there is no blow-up in finite time and thus $I_{z(\cdot)}=\mathbb{R}_+$.  

We now prove that Eq.~\eqref{mainR} holds true for any trajectory  $z(\cdot)$ of $(S)$. Assume first that $1\geq \bar\Phi$. Then, for $t>\bar \Phi$, one has that $ \hat \varphi_\vep(t)\geq \bar\Phi$ since $F_\vep$ takes values larger than $1$. It implies that $\dot V_1\leq -c V_1 ^\alpha$ and thus one gets convergence to zero in finite time. 
Assume next that $1<\bar\Phi$. Set $\overline{V}_1:=\vep(1-\frac1{\bar\Phi})$ and notice that $\fep(\overline{V}_1)=\bar\Phi$. In that case, for $t>\bar \Phi$, Eq.~\eqref{dot_V1} can be written
\beqnum\label{eq:V_1-1}
\dot V_1\leq -c V_1 ^\alpha -k\gamma_m\left|\frac{\partial V_1}{\partial z_r } \right| \min(0, \fep(V_1(z))-\fep(\overline{V}_1)).
\eeqnum
Taking into account the fact that $F_\vep$ is an increasing function on $[0,\vep)$, one deduces that there exists at most one time $\bar{t}>1$ such that $V_1(z(\bar{t}))=\overline{V}_1$ (since $\dot V_1(\bar{t})\leq -c\overline{V_1}^\alpha<0$) and if it exists, then 
  $V_1(z(t))<\overline{V_1}$ for $t>\bar{t}$. If such a time $\bar{t}$ does not exists, then one has necessarily $V_1(z(t))<\overline{V_1}$ for $t>\bar \Phi$ since the other alternative would yield convergence in finite time and thus a contradiction. 
  
\begin{rem} In case the controller $u_0$ is bounded, one can remove the assumption that $\lim_{x\rightarrow +\infty}g(x)=+\infty$.
\end{rem}

\subsection{Asymptotic bounds for the controller $u$ and the convergence time to the neighborhood ${\cal{V}}_\vep$}
One deduces from Theorem~\ref{theorem_adaptatif} the following two
 results. The first one is immediate and 
 provides an asymptotic upper bound for the controller $u$.
\begin{lemme}
For $\vep>0$, the controller $u_\vep$ defined in \eqref{eq_v} verifies the following asymptotic upper bound, which is uniform with respect to trajectories of the closed-loop system $(S)$: if $1\geq \bar\Phi$, then
$\lim_{t\rightarrow \infty}{|u_\vep|}=1$ and if $1<\bar\Phi$, then
\beqnum
\limsup_{t\rightarrow \infty}{|u_\vep|} \le U_0(\overline{V}_1)g(U_0(\overline{V}_1))+k\bar \Phi,
\eeqnum
where 
$U_0(\xi):=\max_{\{z\, \vert\,  V_1(z) \le \xi \}} |u_0(z)|$ for $\xi\geq 0$.
\end{lemme}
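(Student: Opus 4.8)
The plan is to bound the controller
$u_\vep = g(|u_0|)u_0 + k\,\text{sgn}(u_0)\hat\varphi_\vep$
asymptotically by controlling each of its two summands separately and then combining them. The key observation is that by Theorem~\ref{theorem_adaptatif} we know $\limsup_{t\to\infty}V_1(z(t))\le \max(0,\overline{V}_1)$, so asymptotically every trajectory is trapped in the sublevel set $\{V_1\le \overline{V}_1\}$ (when $1<\bar\Phi$) or converges to the origin (when $1\ge\bar\Phi$). This sublevel set is the natural place to read off bounds on the state-dependent quantities.

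First I would treat the case $1\ge\bar\Phi$. Here the proof of Theorem~\ref{theorem_adaptatif} gives finite-time convergence $z(t)\to 0$, hence $|u_0(z(t))|\to 0$, so $g(|u_0|)|u_0|\to g(0)\cdot 0 = 0$ and the first summand vanishes. For the adaptive term, once the trajectory has reached the origin the quantity $\hat\varphi_\vep(t,V_1(z))$ settles: since $V_1(z)\to 0 <\vep$, one has $\hat\varphi_\vep = \min(t,F_\vep(V_1))$ and as $t$ grows this equals $F_\vep(0)=1$. Thus $|u_\vep|\to 0 + k\cdot 1\cdot$ — here one must check the normalization; the claim $\lim|u_\vep|=1$ presumably uses a specific choice making the surviving term equal $1$ (e.g. $k=1$ and $F_\vep(0)=1$). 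I would verify this by tracking exactly which terms survive and using $F_\vep(0)=\vep/(\vep-0)=1$.

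Next I would handle the main case $1<\bar\Phi$. Asymptotically $V_1(z(t))\le\overline{V}_1$, so by the very definition of $U_0(\xi)=\max_{\{V_1(z)\le\xi\}}|u_0(z)|$ we get $\limsup_t |u_0(z(t))|\le U_0(\overline{V}_1)$. Since $g$ is increasing, the first summand satisfies $\limsup_t g(|u_0|)|u_0|\le U_0(\overline{V}_1)\,g(U_0(\overline{V}_1))$. For the second summand, $|k\,\text{sgn}(u_0)\hat\varphi_\vep|\le k|\hat\varphi_\vep|$, and because $V_1\le\overline{V}_1<\vep$ eventually, we have $\hat\varphi_\vep=\min(t,F_\vep(V_1))\le F_\vep(V_1)\le F_\vep(\overline{V}_1)=\bar\Phi$, using that $F_\vep$ is increasing and the identity $F_\vep(\overline{V}_1)=\bar\Phi$ noted in the proof above. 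Adding the two asymptotic bounds and applying subadditivity of $\limsup$ yields exactly $\limsup_t |u_\vep|\le U_0(\overline{V}_1)g(U_0(\overline{V}_1))+k\bar\Phi$.

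The step I expect to require the most care is the interchange between the pointwise asymptotic bound $\limsup_t V_1(z(t))\le\overline{V}_1$ and the bound on $U_0(\overline{V}_1)$: strictly, $V_1(z(t))$ may exceed $\overline{V}_1$ by an arbitrarily small amount for arbitrarily large $t$, so I would argue that for any $\eta>0$ the trajectory eventually lies in $\{V_1\le\overline{V}_1+\eta\}$, invoke continuity and monotonicity of $\xi\mapsto U_0(\xi)$ and of $g$ and $F_\vep$ to bound $|u_\vep|$ by $U_0(\overline{V}_1+\eta)g(U_0(\overline{V}_1+\eta))+kF_\vep(\overline{V}_1+\eta)$, and then let $\eta\to0$. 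This limiting argument, together with verifying that $U_0$ is well-defined and finite (which follows since $u_0$ is continuous and $\{V_1\le\xi\}$ is compact, $V_1$ being positive definite and homogeneous hence proper), is the only genuinely technical point; the rest is direct substitution into~\eqref{eq_v}.
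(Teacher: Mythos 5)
Your proof is correct and follows exactly the route the paper intends: the paper states this lemma without proof, calling it an immediate consequence of Theorem~\ref{theorem_adaptatif}, and your term-by-term bounding of \eqref{eq_v} --- $|u_0|\le U_0(\overline{V}_1)$ on the limiting sublevel set, monotonicity of $g$, and $\hat\varphi_\vep\le F_\vep(V_1)\le F_\vep(\overline{V}_1)=\bar\Phi$ --- is precisely that deduction (and your flagging of the $k$-normalization in the case $1\ge\bar\Phi$, where the claimed limit $1$ tacitly presumes $k=1$, is a fair reading of a genuinely imprecise statement). Your $\eta\to 0$ limiting argument with right-continuity of $U_0$ is sound but can be short-circuited: the proof of Theorem~\ref{theorem_adaptatif} actually establishes the stronger fact that $V_1(z(t))<\overline{V}_1$ for all sufficiently large $t$, not merely $\limsup_{t\to\infty}V_1(z(t))\le\overline{V}_1$, so the trajectory eventually lies in the closed sublevel set and the bounds apply directly.
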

For $\vep>0$, define  the open neighborhood ${\cal{V}}_\vep$ of the origin as the set of points $z\in\mathbb{R}^r$ such that $V_1(z)<\vep$. Our second result  provides an asymptotic upper bound for the time needed by any trajectory
of the closed-loop system $(S)$ to eventually enter ${\cal{V}}_\vep$ and remain  inside. 
%

\begin{Proposition}\label{prop:temps}
For $z_0\in\mathbb{R}^r$, the convergence time $T_{z_0}$ needed to reach the open neighborhood ${\cal{V}}_\vep$ of the origin and stay inside verifies the following bound: 
\beqnum\label{eq:Tz0}
T_{z_0}\leq \bar \Phi+\frac{\Big(V_1^{1-p_{r+2}}(z_0)+(1-p_{r+2})C_1\bar\Phi\Big)^{\frac{1-\alpha}{1-p_{r+2}}}}{c(1-\alpha)},
\eeqnum
where the constants $c,\alpha$ are provided by Theorem~\ref{theo1} and the constants $\beta,C_1$ are defined in Eq.~\eqref{dot_V1-1}.
\end{Proposition}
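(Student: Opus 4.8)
The plan is to split the time axis at $t=\bar\Phi$, since the two summands in the claimed bound \eqref{eq:Tz0} correspond exactly to these two phases: the waiting time $\bar\Phi$ needed for the adaptive gain $\hat\varphi_\vep$ to become effective, and the subsequent finite-time descent of $V_1$. Throughout I would rely on the scalar comparison principle: if $\dot V_1\leq f(V_1)$ holds a.e. and $w$ solves $\dot w=f(w)$ with $w(t_0)=V_1(z(t_0))$, then $V_1(z(t))\leq w(t)$ for as long as both are defined. The whole argument then reduces to integrating two explicitly solvable scalar comparison equations and gluing them together.

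For the first phase $[0,\bar\Phi]$ I would discard the dissipative term in \eqref{dot_V1-1} (it is non-positive) and keep only $\dot V_1\leq C_1V_1^{p_{r+2}}$. Because $p_{r+2}\in[0,1)$, the associated comparison equation $\dot w=C_1w^{p_{r+2}}$ integrates to $w^{1-p_{r+2}}(t)=w^{1-p_{r+2}}(0)+(1-p_{r+2})C_1t$, which yields
\beqnum
V_1(z(\bar\Phi))\leq \Big(V_1^{1-p_{r+2}}(z_0)+(1-p_{r+2})C_1\bar\Phi\Big)^{\frac1{1-p_{r+2}}}=:V_1^\ast.
\eeqnum
This is precisely the quantity that, raised to the power $\frac{1-\alpha}{1-p_{r+2}}$, appears in the numerator of \eqref{eq:Tz0}, which is the structural hint confirming that the split at $\bar\Phi$ is the correct one.

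For the second phase $t>\bar\Phi$ I would invoke the estimate already established in the proof of Theorem~\ref{theorem_adaptatif}: once $t>\bar\Phi$ one has $\hat\varphi_\vep\geq\bar\Phi$ on the relevant range, so that $\dot V_1\leq -cV_1^\alpha$ holds as long as $V_1\geq\overline{V}_1$ (and when $\bar\Phi\leq1$ this holds unconditionally, with $V_1$ even converging to zero). Integrating the comparison equation $\dot w=-cw^\alpha$ from the value $V_1^\ast$, whose solution satisfies $w^{1-\alpha}(t)=w^{1-\alpha}(\bar\Phi)-c(1-\alpha)(t-\bar\Phi)$, bounds the extra time needed to drive $V_1$ down to $\overline{V}_1$ by at most $\frac{(V_1^\ast)^{1-\alpha}}{c(1-\alpha)}$, which is the second summand of \eqref{eq:Tz0}. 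Since $\overline{V}_1<\vep$, reaching the level $\overline{V}_1$ means the trajectory has entered ${\cal V}_\vep$, and Theorem~\ref{theorem_adaptatif} (more precisely, the uniqueness of the crossing time $\bar t$ shown there) guarantees it never leaves afterwards; hence this bound controls the time to \emph{enter and remain}.

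The main obstacle is the bookkeeping of which differential inequality is valid in which regime: the crude bound $\dot V_1\leq C_1V_1^{p_{r+2}}$ must be used on $[0,\bar\Phi]$, where the adaptation has not yet taken effect, whereas the sharp bound $\dot V_1\leq -cV_1^\alpha$ is available only for $t>\bar\Phi$ and only while $V_1\geq\overline{V}_1$. Care is needed to confirm, via the comparison inequality $V_1\leq w$, that the actual trajectory reaches $\overline{V}_1$ no later than the comparison solution does, so that the descent genuinely terminates inside ${\cal V}_\vep$; and the borderline case $\bar\Phi\leq1$ should be recorded separately, where the waiting term $\bar\Phi$ still bounds the first phase even though convergence is then to the origin rather than merely to ${\cal V}_\vep$.
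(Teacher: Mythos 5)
Your proposal is correct and follows essentially the same route as the paper's own proof: split the time axis at $t=\bar\Phi$, integrate the crude inequality $\dot V_1\leq C_1V_1^{p_{r+2}}$ obtained from \eqref{dot_V1-1} on $[0,\bar\Phi]$ to get the intermediate value $V_1^\ast$, then integrate $\dot V_1\leq -cV_1^\alpha$ (valid for $t>\bar\Phi$ while the trajectory is outside the target set) to bound the remaining descent time, invoking Theorem~\ref{theorem_adaptatif} to conclude the trajectory stays in ${\cal{V}}_\vep$. Your write-up is in fact somewhat more careful than the paper's, which compresses the second phase and the invariance argument into the assertion that the bound is ``then clear''.
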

\begin{proof}
For $t\leq \bar \Phi$, one deduces from Eq.~\eqref{dot_V1-1} the upper bound 
$$
V_1(z( \Phi))\leq\Big( V_1^{1-p_{r+2}}(z_0)+(1-p_{r+2})C_1\bar\Phi\Big)^{\frac1{1-p_{r+2}}}.
$$ 
For $t\geq \bar \Phi$, $V_1$ verifies either 
Eq.~\eqref{est-V1} or Eq.~\eqref{eq:V_1-1}, which reduces to Eq.~\eqref{est-V1} if $z(t)\notin{\cal{V}}_\vep$. It is then clear that the right-hand side of Eq.~\eqref{eq:Tz0} is an upper bound for $T_{z_0}$. 
\end{proof}

\section{Simulation Results}\label{simulation}
The performances of the proposed control law are studied next through simulation.
In this section we will perform simulations for uncertain systems of order one and three.

\subsection{Simulation for first order system}
Consider the first order system
\beq
	\dot z_1 = \varphi + \gamma u,
\eeq
where $\gamma$ and $\varphi$ are \textit{discontinuous} bounded uncertainties defined as
\beqnum
	\varphi = 5 \text{sgn}(cos(t)) - 10 \sin(2t),
	\gamma = 3 + 2\text{sgn}(\sin(3t)).
\eeqnum
One can see that
\beq
		0 < 1 \le \gamma \le 5, \ |\varphi| \le 15.
\eeq
The candidate $u_0$ and its related Lyapunov function are given next as:
\beq
	u_0 = -\text{sgn}(z_1),\quad V_1 = |z_1|.
\eeq
The control objective consists to force $z_1$ to the neighborhood of zero defined by $V_1 :=|z_1| \le \epsilon :=0.1$. Based on the given $u_0$ with simple computation, the controller $u$ can be defined as
\beqnum
	u = -\left( 1+\varphi_\varepsilon(t)  \right) \text{sgn}(z_1).
\eeqnum
The performance of the proposed controller with respect to the uncertainties is presented in Figure \ref{ordre1}. 
On can see in Figure \ref{ordre1_s}, the convergence of the state $z_1$ the predefined neighborhood of zero. The control objective is satisfied without overestimation of the controller as seen in Figure \ref{ordre1_u}.
\begin{figure}[htbp!]
\centering
\subfigure[Control law $u$]{
    \includegraphics[width= 8 cm]{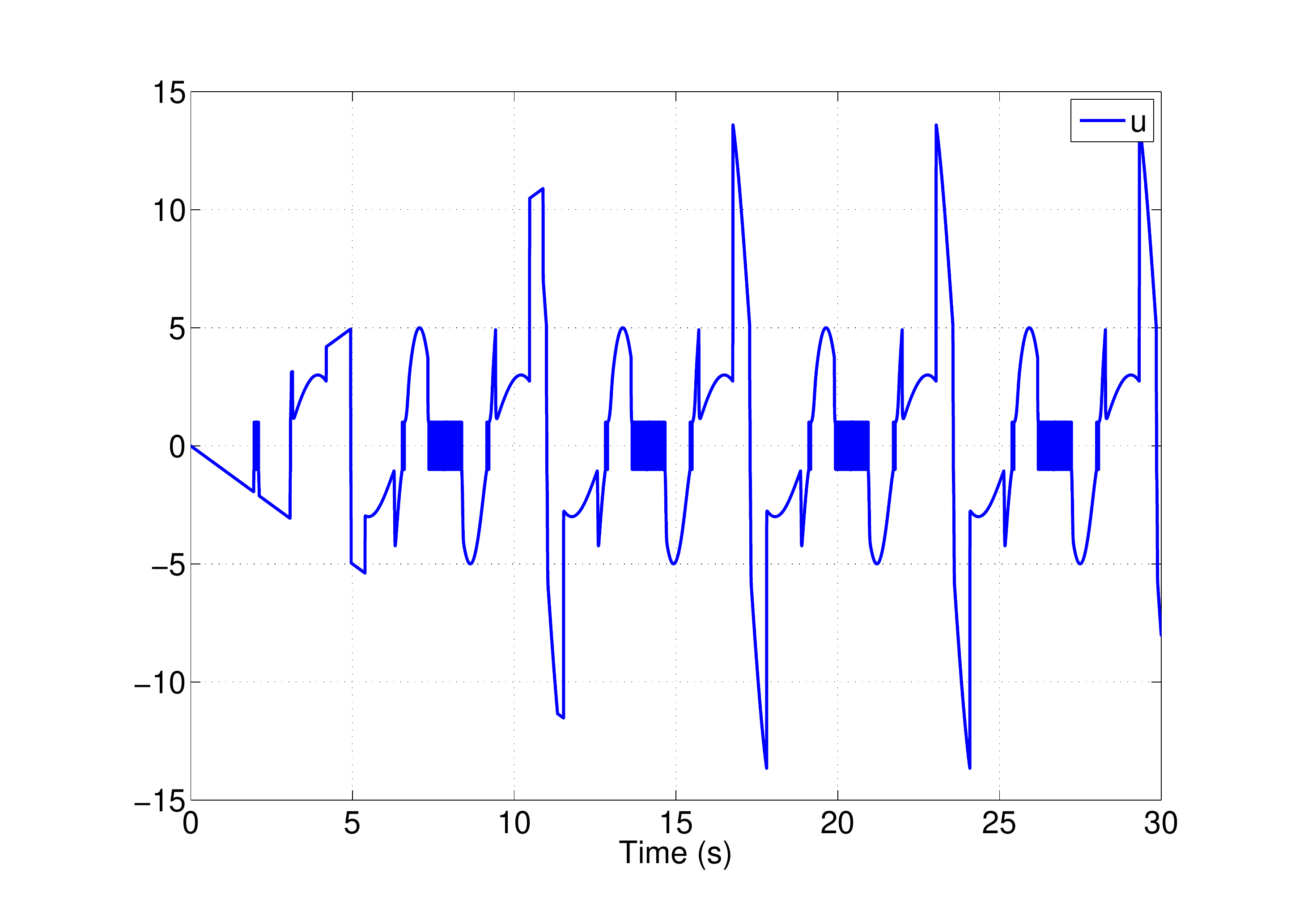}
    \label{ordre1_u}
}\hspace{-1cm}
\subfigure[State $z_1$]{
    \includegraphics[width= 8 cm]{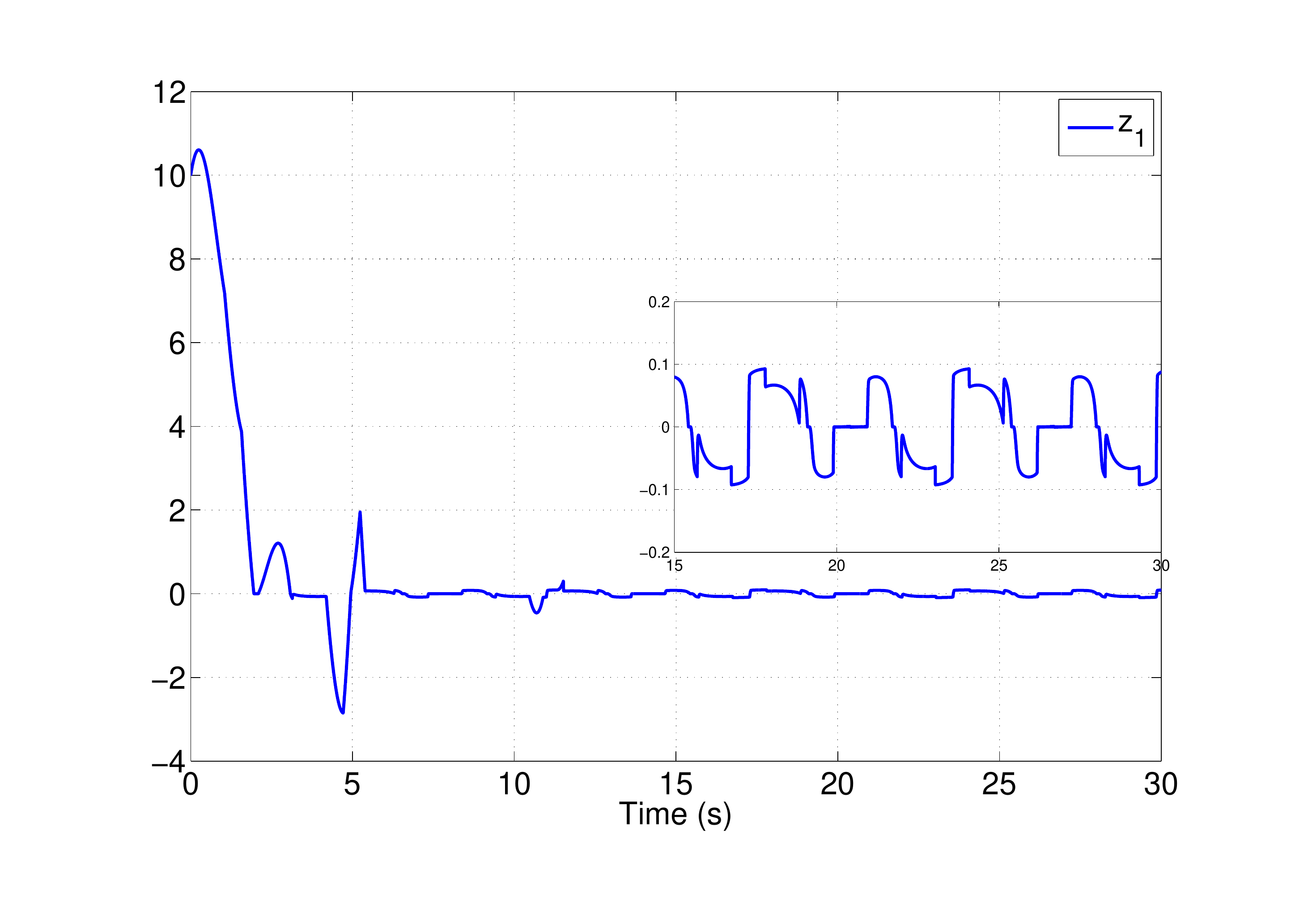}
    \label{ordre1_s}
}\hspace{-1cm}
\caption{Simulation for order 1}
\label{ordre1}
\end{figure}

\subsection{Simulation for third order system}

For arbitrary order, we can refer to Hong's controller \cite{Hong2005} as candidate for the controller $u_0$ defined as follows. Let $\kappa<0$ and $l_1,\cdots,l_r$ positive real numbers. For $z=(z_1,\cdots,z_r)$, we define for $ i=0,...,r-1$:
\beqnum\label{Hongfunction}
p_i = 1+(i-1)\kappa,\\ v_0=0,\,
v_{i+1} = -l_{i+1} \lfloor\lfloor z_{i+1} \rceil^{\beta_i } - \lfloor v_i \rceil^{\beta_i } \rceil^{(\alpha_{i+1}/(\beta_i)},\\
u_0 = v_r,
\eeqnum
where $\alpha_i=\frac{p_{i+1}}{p_i}$.\\
The Lyapunov Function candidate $V_1$ is defined in the following form:
\beqnum\label{Lyap-Hong}
{V_1} = \sum\limits_{j = 1}^r \int\limits_{{v_{j - 1}}}^{{z_j}} {{{\left\lfloor s \right\rceil }^{{\beta _{j - 1}}}} - {{\left\lfloor {{v_{j - 1}}} \right\rceil }^{{\beta _{j - 1}}}}ds}.
\eeqnum

In this section, we consider the following third order system
\beq
	\dot z_1 &=& z_2,\\
	\dot z_2 &=&  z_3,\\
	\dot z_3 &=& \varphi + \gamma u,
\eeq
with the same uncertainty as given in previous simulations. In this  simulation, the control parameters of $u_0$ have been tuned to the following values
\beqnum
		l_1=1,\ l_2=2,\ l_3 = 5,\ \kappa = -1/4.
\eeqnum
The function $g(u_0)$ has been taken as 
\beqnum
		g(u_0) = 1 + \log(1+|u_0|).
\eeqnum
The control objective consists to force the states $z_1,\ z_2, \ z_3$ to a neighborhood of zero defined by $\left\{z=(z_1,z_2,z_3) : V_1(z_1,z_2,z_3) \le 0.01\right\}$. In Figure \ref{ordre3_s}, one can see the practical convergence of $z_1$, $z_2$ and $z_3$. The control objective is achieved as seen in Figure \ref{ordre3_LLV1}, where $LLV1 := \log (1 + V_1)$. The controller and the adaptive gain are presented in Figure \ref{ordre3_u} and Figure \ref{ordre3_k} respectively.
\begin{figure}[htbp!]
\centering
\subfigure[Control law $u$]{
    \includegraphics[width= 8 cm]{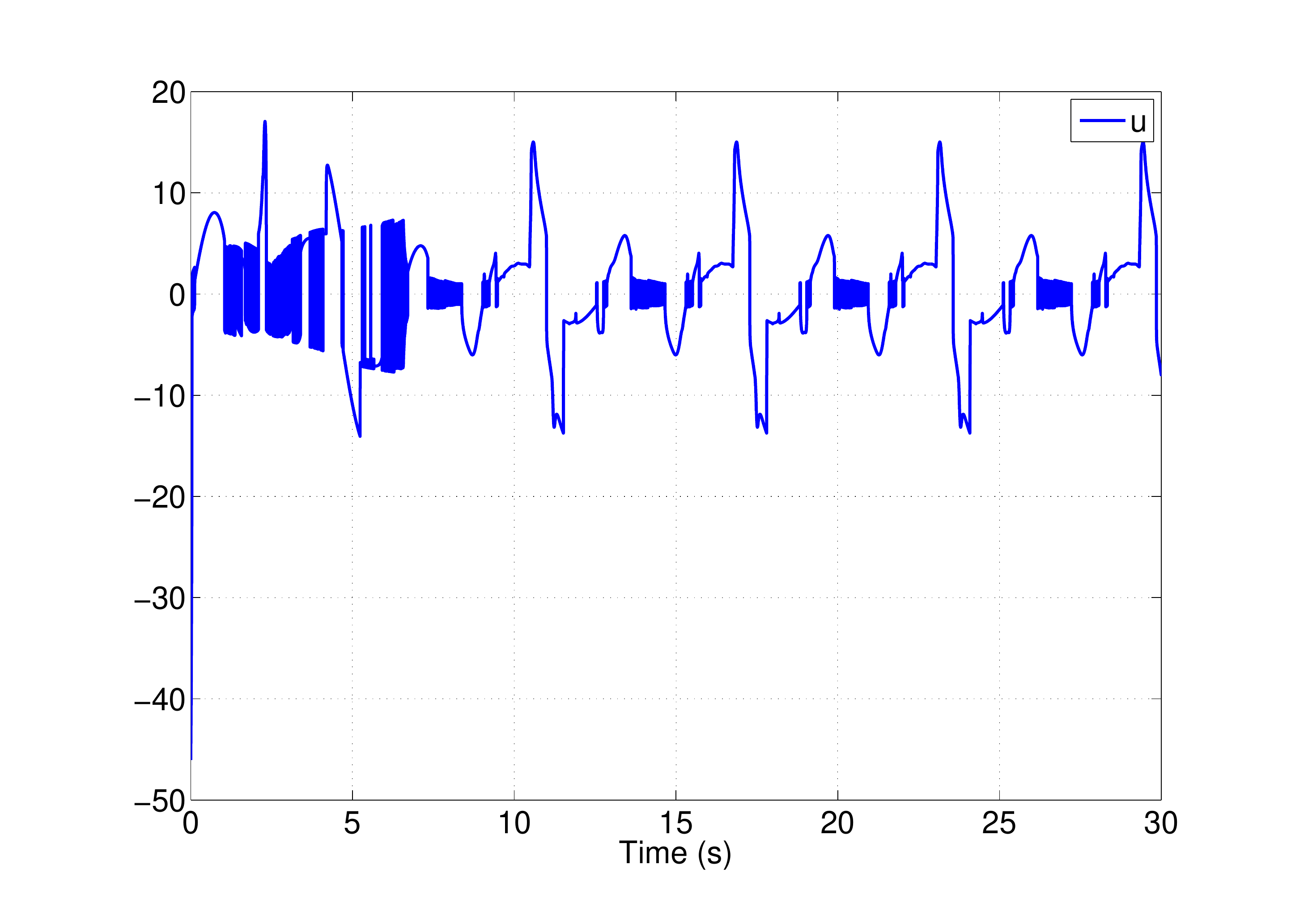}
    \label{ordre3_u}
}\hspace{-1cm}
\subfigure[State $z_1$]{
    \includegraphics[width= 8 cm]{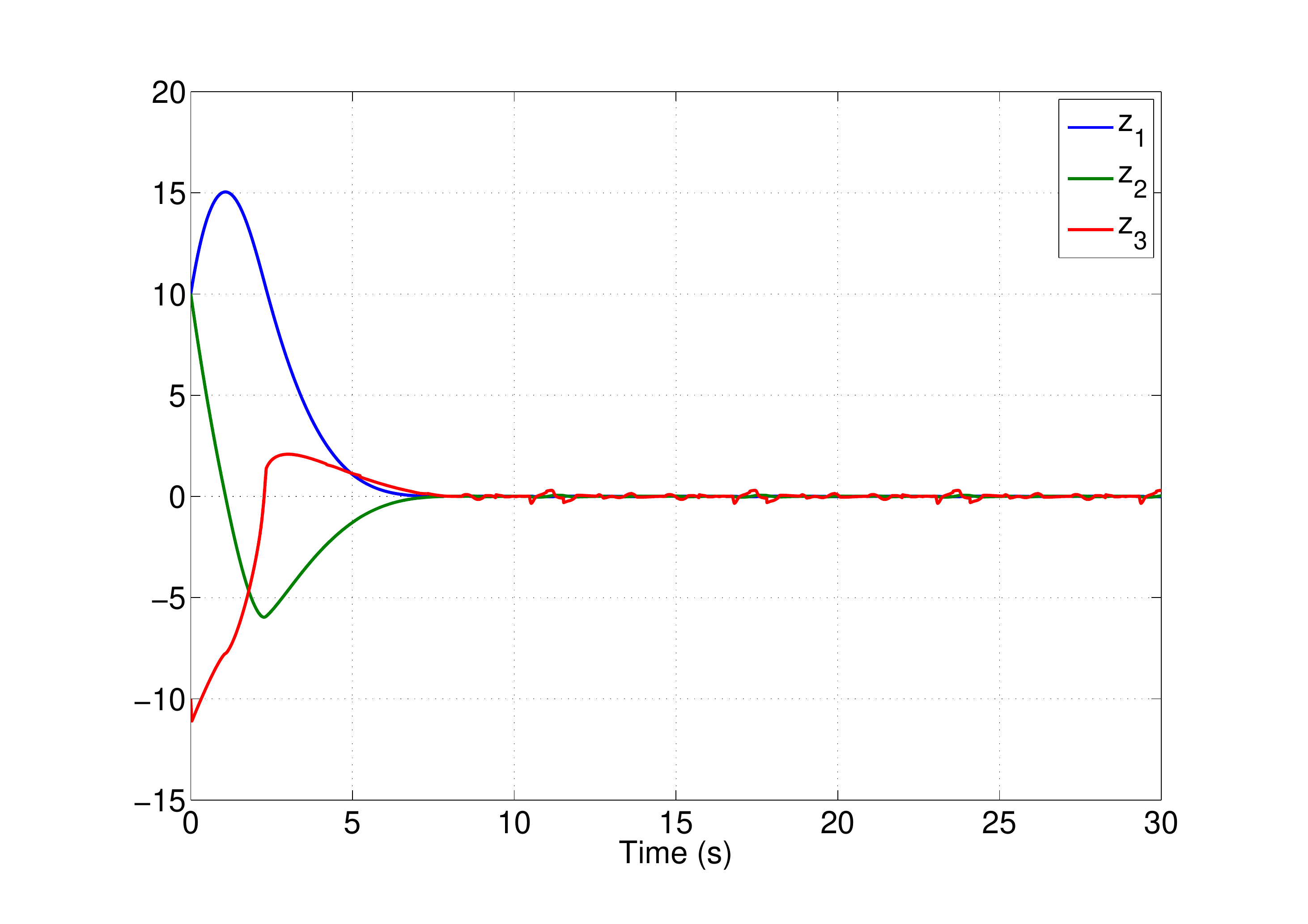}
    \label{ordre3_s}
}\hspace{-1cm}
\subfigure[ Adaptive gain $\hat\varphi_\varepsilon$]{
    \includegraphics[width= 8 cm]{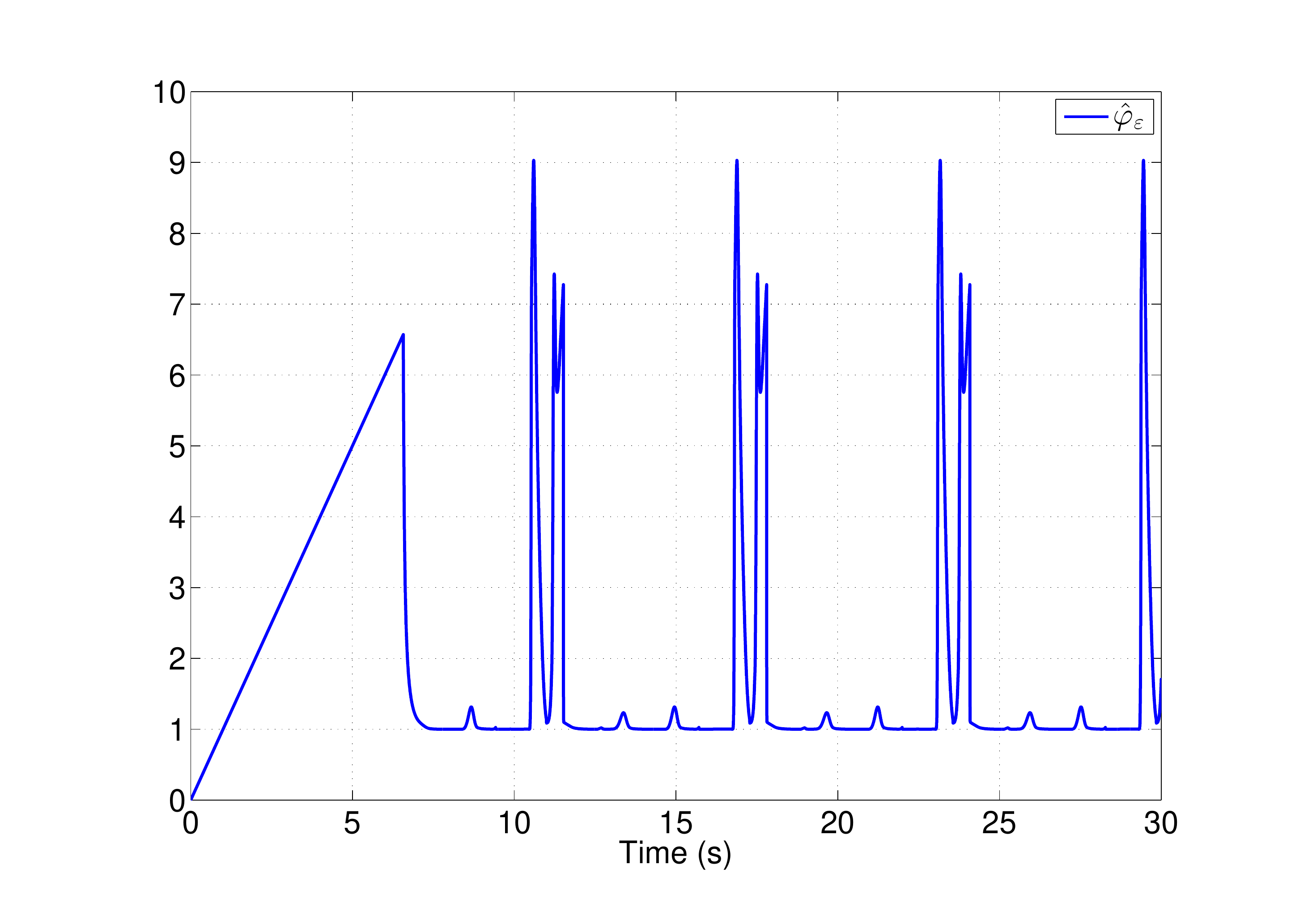}
    \label{ordre3_k}
}\hspace{-1cm}
\subfigure[ Lyapunov Function]{
    \includegraphics[width= 8 cm]{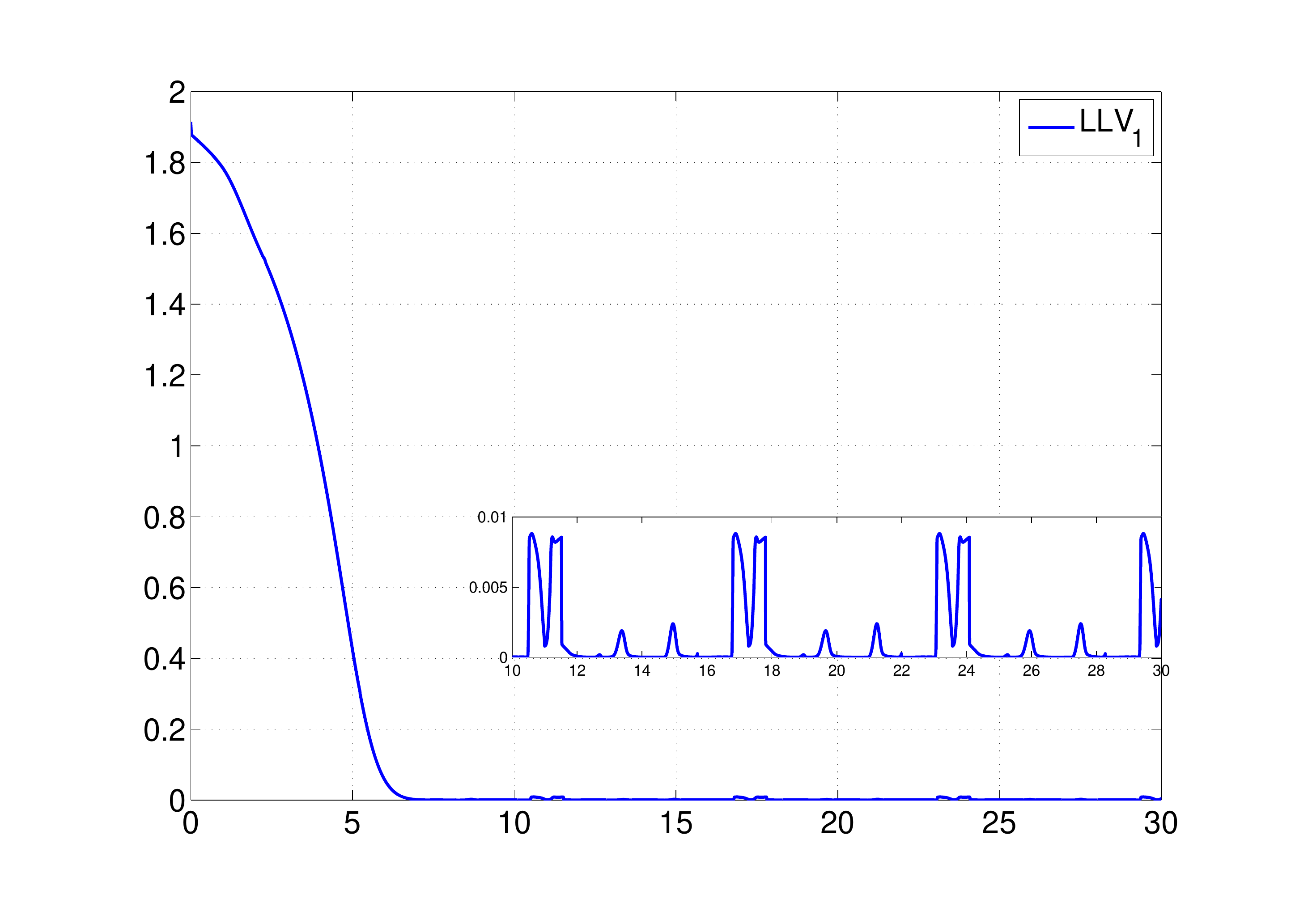}
    \label{ordre3_LLV1}
}
\caption{Simulation for order 3}
\label{ordre3}
\end{figure}

\section{Conclusions}
\noindent This paper has proposed a new Lyapunov-based adaptive scheme for higher-order sliding mode controller with bounded unknown uncertainties. The proposed adaptive controller guarantees finite time convergence to an adjustable arbitrary neighborhood of origin.  The advantage of this adaptive controller, compared to others, is that this controller can be extended to any arbitrary order. In addition, the state is confined inside the neighborhood after convergence and cannot escape. As a result, there is no state overshoot and no gain overestimation in this controller; and the neighborhood of convergence can be chosen as small as possible.

\bibliography{References_Stabilisation_2016_06_28}

\end{document}